\newtheorem{theorem}{Theorem}[section]
\newtheorem{lemma}[theorem]{Lemma}
\newtheorem{proposition}[theorem]{Proposition}
\theoremstyle{definition}
\newtheorem{question}[theorem]{Question}
\theoremstyle{remark}
\newtheorem{claim}{Claim}
\def\lqedsymbol{\ifmmode$\lrcorner$\else{\unskip\nobreak\hfil
		\penalty50\hskip1em\null\nobreak\hfil$\rule{1.2ex}{1.2ex}$
		\parfillskip=0pt\finalhyphendemerits=0\endgraf}\fi} 
\newenvironment{claimproof}[1][\proofname]
{%
	\proof[#1]%
}
{%
	\endproof%
}
\crefname{section}{section}{sections}
\crefname{lemma}{lemma}{lemmata}
\crefname{theorem}{theorem}{theorems}
\crefname{claim}{claim}{claims}
\crefname{proposition}{proposition}{propositions}
\crefname{question}{question}{questions}
\newcommand{\braces}[1]{\lbrace #1 \rbrace}
\newcommand{\verts}[1]{\lvert #1 \rvert}
\newcommand{\mcalI}{\mathcal{I}}
\newcommand{\mcalP}{\mathcal{P}}
\newcommand{\mcalQ}{\mathcal{Q}}
\newcommand{\mbbN}{\mathbb{N}}
\newcommand{\mbbQ}{\mathbb{Q}}
\newcommand{\GM}{G_M}
\newcommand{\NM}{N_M}
\newcommand{\GB}{G_B}
\newcommand{\Kalephzero}{K^{\aleph_0}}
\newcommand{\NameToWhereWePlay}{x}
\newcommand{\KQ}{K^\mbbQ}
\newcommand{\Pleq}{\leq_\mcalP}
\title[The $\mathbb{Q}$ game]{The Rational Number Game}
\author[N.\ Bowler]{Nathan Bowler}
\author[F.\ Gut]{Florian Gut}
\address{Universit\"at Hamburg, Department of Mathematics, Bundesstrasse 55 (Geomatikum), 20146 Hamburg, Germany}
\email{\{nathan.bowler, florian.gut\}@uni-hamburg.de}
\keywords{Games on graphs, infinite graph, rational numbers, rational number game, infinite game, complete graph, complete graph game, graph building game}
\begin{document}
	
	\begin{abstract}
		We investigate a game played between two players, Maker and Breaker, on a countably infinite complete graph where the vertices are the rational numbers.
		The players alternately claim unclaimed edges.
		It is Maker's goal to have after countably many turns a complete infinite graph contained in her coloured edges where the vertex set of the subgraph is order-isomorphic to the rationals.
		It is Breaker's goal to prevent Maker from achieving this.
		We prove that there is a winning strategy for Maker in this game.
		We also prove that there is a winning strategy for Breaker in the game where Maker must additionally make the vertex set of her complete graph dense in the rational numbers.
	\end{abstract}
	
	\maketitle
	\section{Introduction}
	Games have always fascinated mathematicians and thus the study of games is a well-established part of the field of combinatorics.
	A particular type of game that falls squarely into graph theory is called the $H$\emph{-building game}.
	This is a game played by two players on a complete graph $G$ as the board.
	The players choose a graph $H$ at the beginning of the game and during the course of the game they alternately claim edges of $G$.
	The first player that has a copy of $H$ contained in the subgraphs of $G$ induced by their respective claimed edges wins the game.
	A comprehensive introduction to games on graphs and other combinatorial games can be found in Beck's book \cite{B08}.
	
	If one considers $H$-building games where $G$ is finite (and therefore also $H$), then one quickly finds that there is always a winning strategy for the first player as long as $G$ is large enough in relation to $H$:
	by Ramsey's theorem~\cite{R30}, there is a number $n=R(\verts{V(H)})$ such that for any $m \geq n$ every colouring of the edges of $K^m$ with two colours contains a copy of $H$ in one of the colour classes.
	Thus, one of the players must have a winning strategy.
	Another classic result called \emph{strategy stealing} (see \cite{B08}) implies that this can only be the first player.
	Due to this the existence of a winning strategy for the first player is known for many games, albeit with no insight into a precise sequence of moves that secure a win for the first player.
	
	A possible approach to circumvent the shortcuts provided by Ramsey theory and strategy stealing is to consider infinite $H$-building games instead:
	on the one hand one may assume that the board $G$ is infinite while $H$ is finite, on the other hand one could also allow $H$ to be infinite.
	In either case the analysis becomes more difficult, as explicit winning strategies must be found.
	This gets tough already for small graphs $H$, as illustrated in \cite{BG23K4}, in which the authors provide a winning strategy for the first player in the $K^4$-building game.
	Thus far, $K^4$ is the largest complete graph for which a winning strategy on the infinite complete board is known.
	
	As the search for winning strategies appears to be demanding in infinite $H$-building games, one may instead consider the \emph{Maker-Breaker} version of the game.
	In that game the premises are the same but only one of the players, Maker, has the aim to complete a copy of $H$ while the other player, Breaker, simply has the objective to stop Maker from completing her copy of $H$.
	
	While for $H$-building games on an infinite board Ramsey's theorem no longer ensures the existence of a winning strategy, there still is a close connection to Ramsey theory (see \cite{GRS90} by Graham, Rothschild and Spencer for an introduction):
	Edges of the board are coloured with two colours and the players want to find a designated substructure completely within their colour classes, in contrast to Ramsey theory where all edges of a graph are coloured and one wants to find the relevant substructure in one of the colours.
	Therefore these types of games are also called \emph{Ramsey games}.
	
	Given the close relationship of Ramsey theory and Ramsey games, it is natural to ask how close the relationship is exactly.
	For this purpose consider a generalisation of the $H$-building game which we call the \emph{structural $H$-building game}:
	One may introduce further structural information into the board $G$ and then require that $H$ satisfies an additional property with regards to the structural information of $G$.
	For example one may consider an order on the vertices of $G$ and require that the subset of vertices that form Maker's copy of $H$ is order-isomorphic to $V(G)$.
	With this we return to the relationship of Ramsey theory and Ramsey games.
	There are two sides to the question how closely the two fields are related.
	\begin{question}\label{ques:Winning_strategy_implied_by_Ramsey}
		Let $G$ be a complete graph with a structural property and $H$ be a graph with a structural property of its vertex set that is compatible with that of $V(G)$.
		Suppose that for any $2$-colouring of the edges of $G$ there is a monochromatic copy of $H$ contained in $G$ as a subgraph.
		Is there a winning strategy for Maker in the structural $H$-building game on $G$?
	\end{question}
	So far all research seems to be supportive of this assertion, the authors are not aware of any example disproving \cref{ques:Winning_strategy_implied_by_Ramsey}.
	\begin{question}\label{ques:Ramsey_implied_by_winnging_strategy}
		Let $G$ be a complete graph with a structural property and $H$ be a graph with a structural property of its vertex set that is compatible with that of $V(G)$.
		Suppose that there is a winning strategy for Maker in the structural $H$-building game on $G$.
		Does this imply that for any $2$-colouring of the edges of $G$ there is a monochromatic copy of $H$ contained in $G$ as a subgraph?
	\end{question}
	A good example for this is the $K^4$-building game.
	For $n \geq 17$ the $K^4$-building game on $K^n$ is a first player win \cite{BG23K4} and the Ramsey number of $4$ is $18$~(see Greenwood and Gleason in \cite{GG55}).
	In particular for every board size of $n \geq R(4)$ there is a winning strategy for the first player.
	Notably there is a small discrepancy between the sufficient board size of $17$ and the Ramsey number of $18$.
	(Note that in \cite{BG23K4} there is no structural property considered but one could e.g. assume that the vertices of the board $\Kalephzero$ and the vertices of $K^4$ are totally ordered, which gives the same result.)
	
	Similarly in the Maker-Breaker version of the $\Kalephzero$-building game played on $\Kalephzero$ there is a winning strategy for Maker \cite{BEG23}*{Theorem 3.1} and for any $2$-colouring of the countably infinite complete graph there is a monochromatic $\Kalephzero$, see e.g. Dushnik and Miller \cite{DM41}*{Theorem 5.22} for a proof of Ramsey's theorem for infinite graphs.
	
	In this paper we answer \cref{ques:Ramsey_implied_by_winnging_strategy} in the negative.
	We present a game and a winning strategy for Maker in that game where the corresponding statement in Ramsey theory is not true.
	
	Before stating the main result of this paper let us shed some light on another disparity of finite and infinite graphs which has to do with total orders.
	While any two total orders on a given finite set are isomorphic, this is not true for infinite total orders.
	For example there are two non-isomorphic total orders on the rational numbers:
	The usual order and one induced by an enumeration of the rationals.
	We carry this over to the setting of structured $H$-building games, and make use of this discrepancy:
	Let $\KQ$ be the complete graph with the rational numbers $\mbbQ$ as the vertices. 
	In the \emph{$\KQ$-building} game we call $\KQ$ the \emph{board}, of which the two players Maker and Breaker alternately claim edges.
	The aim of Maker is to have contained in the subgraph induced by her claimed edges a copy of the board.
	That is, a complete graph such that its vertex set (which is a subset of $\mbbQ$) with the order induced by $\mbbQ$ is order isomorphic to $\mbbQ$.
	It is Breaker's goal to stop Maker from achieving this.
	We will prove that there is a winning strategy for Maker.
	\begin{theorem}\label{theo:rational_number_game_intro_statement}
		There is a winning strategy for Maker in the $\KQ$-building game.
	\end{theorem}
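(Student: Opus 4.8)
The plan is to exploit the freedom Maker has in choosing \emph{which} rational vertices end up in her clique. By Cantor's theorem a countable linear order is isomorphic to $\mbbQ$ precisely when it is dense in itself and has neither a least nor a greatest element, so it is enough for Maker to build, in her own edges, an infinite complete graph on a vertex set $S \subseteq \mbbQ$ which, with the order inherited from $\mbbQ$, is dense in itself and unbounded above and below. These are countably many requirements on $S$, and Maker will discharge all of them by a bookkeeping argument, provided she keeps enough control over where she places the vertices she adds to $S$.

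The engine of the strategy is a nested family of \emph{pools}. Maker commits the vertices of $S$ one at a time, as $a_1, a_2, \dots$, always having already claimed every edge among the vertices committed so far; and alongside $a_1, \dots, a_d$ she maintains finite sets $\mbbQ = P_0 \supseteq P_1 \supseteq \dots \supseteq P_d$, with $P_k$ consisting of those vertices $x \notin \braces{a_1, \dots, a_d}$ for which Maker has already claimed each of $x a_1, \dots, x a_k$. Committing a new vertex $a_{d+1}$ is then ``free'' — the commitment itself claims no edge, Maker merely re-labels a vertex of $P_d$ and uses the turn to make progress on the pools — as soon as a suitable candidate sits in $P_d$, since such a candidate is already joined to all of $a_1, \dots, a_d$ by Maker's edges. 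The pools are enlarged concurrently, in \emph{waves}: fixing an enumeration $I_1, I_2, \dots$ of a countable basis of intervals of $\mbbQ$, in wave $d$ Maker first enlarges each $P_k$ ($k \le d$, in order of increasing $k$) until it meets each of $I_1, \dots, I_d$ in at least $N_{d,k}$ vertices, where the targets $N_{d,k}$ are chosen to grow very fast as $k$ decreases (and as $d$ increases); she then commits a new vertex $a_{d+1}$, chosen from $P_d$ so as to lie in whichever interval is dictated by the next servable bookkeeping task (``realise a new greatest, resp.\ least, element of $S$'', or ``put a point of $S$ strictly between the committed vertices $a_i$ and $a_j$'', the latter becoming servable once $P_d$ meets $(a_i,a_j)$) \emph{and} so as to have no Breaker edge to the rest of $P_d$; then she moves on to wave $d+1$.

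The rapidly growing targets are what make each step legal against Breaker. Enlarging $P_1$ inside an interval $I$ just means claiming an edge $a_1 x$ for a fresh $x \in I$ with $a_1 x$ unclaimed, which Maker can always do because $I$ contains infinitely many vertices untouched so far; hence $P_1$ can be driven arbitrarily large inside every basic interval, so in the limit it is dense and unbounded in $\mbbQ$. Enlarging $P_{k+1}$ inside $I$ means claiming an edge $a_{k+1} x$ for some $x \in P_k \cap I$ with $a_{k+1} x$ unclaimed: each such edge already owned by Breaker has cost him a move, so if $P_k \cap I$ has been made sufficiently larger than $N_{d,k+1}$ and than Breaker's total move count at that stage, then Maker — racing Breaker over the edges from $a_{k+1}$ into $P_k \cap I$ — reaches $N_{d,k+1}$; choosing the gaps between consecutive $N_{d,k}$ (and their growth in $d$) large enough underwrites this, wave by wave, and the same kind of size bound makes the choice of a vertex of $P_d$ with no Breaker edge into $P_d$ possible (a graph on $m$ vertices with fewer than $m/2$ edges has an isolated vertex). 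At the end $S = \braces{a_1, a_2, \dots}$ is a clique in Maker's edges, and the bookkeeping has forced $S$ to meet every interval $(a_i, a_j)$ and to be unbounded above and below, so $S$ is dense in itself and without endpoints, hence order-isomorphic to $\mbbQ$; this proves \cref{theo:rational_number_game_intro_statement}.

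The main obstacle is exactly the design of the waves and of the targets: one must arrange that every enlarging step stays legal forever and every bookkeeping task is eventually served, uniformly in the infinitely many pairs (pool index $k$, basic interval $I$) and robustly against Breaker, who may at any moment concentrate all of his moves on neutralising one pool inside one interval, or on trying to ``clear'' an interval of pool vertices. One has to check that the ``race over an infinite set'' that keeps $P_1$ dense and unbounded, together with the loss of only a controlled fraction when passing from $P_k$ to $P_{k+1}$, can be sustained simultaneously on all fronts by a single schedule of Maker's turns; this is the technical heart of the argument, and is closely related to the strategy for the plain $\Kalephzero$-building game (\cite{BEG23}*{Theorem 3.1}). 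Once it is in place, the order-theoretic conclusion via Cantor's theorem is immediate.
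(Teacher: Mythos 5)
There is a genuine gap, and it sits exactly where you place the ``technical heart'': the online commitment of the clique vertices $a_1,a_2,\dots$ cannot survive a focused attack by Breaker. Once $a_1$ and $a_2$ are both committed and visible (and they become visible: Breaker sees which two vertices Maker keeps connecting new vertices to), Breaker can from that moment on answer every Maker move of the form ``claim $a_1x$'' by claiming $a_2x$ and every move ``claim $a_2x$'' by claiming $a_1x$. This is precisely the pairing mechanism $\braces{p_js,q_js}$ that the paper itself uses to give Breaker a win in the dense game in \cref{sec:dense_rational_number_game}, applied to the single pair $\braces{a_1,a_2}$; it caps the common Maker-neighbourhood of $a_1$ and $a_2$, i.e.\ your pool $P_2$, at its size at the moment Breaker starts this attack, plus a constant. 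Hence $P_2$ stays finite forever, no infinite clique through $a_1$ and $a_2$ exists in $\GM$, and your strategy has no mechanism for abandoning a committed vertex. The quantitative version of the same problem shows why no choice of targets $N_{d,k}$ can help: your stated sufficient condition for the race at level $k+1$ is that $P_k\cap I$ be ``sufficiently larger than Breaker's total move count at that stage'', but $\verts{P_k}$ is at most the number of Maker moves spent so far, which never exceeds Breaker's move count by more than one; Maker replenishes candidates at exactly the rate at which Breaker can destroy them, so the supply never builds up. (The isolated-vertex trick only protects against \emph{pre}-commitment blocking; the attack above happens after commitment.)

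The paper's proof is designed around exactly this obstruction, and the difference is not cosmetic. Maker never commits to the vertices of her clique during play. Instead, each new vertex with history sequence of length $\ell$ is connected into a candidate set $F$ of size $\ell(\ell+1)+1$ in a balanced round-robin (the order \labelcref{equ:order_to_play_balanced}), so that however Breaker plays, at most $\ell$ members of $F$ end up with only finitely many suitable neighbours; the clique is then extracted only after the game ends, by a recursion that at each step uses this redundancy together with a finite-colour Ramsey argument (\cref{prop:Ramsey_true_on_rationals}) over the interval partition $\mcalP$ to pick $u_{k+1}$ and a new reservoir $W_{k+1}$. Because Breaker never knows which of the redundant candidates will be chosen, he cannot focus. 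Your order-theoretic endgame (Cantor's theorem, forcing density-in-itself and unboundedness via intervals) matches the paper's use of \cref{prop:ordered_collection_of_intervals}, but that part was never the difficulty; the game-theoretic core as you describe it would fail against the focused pairing attack, so the proof does not go through.
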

	The corresponding statement in Ramsey theory is false:
	There is a colouring of the graph $\KQ$ with two colours such that there is no monochromatic copy of $\KQ$ contained in either of the colour classes, which is shown in \cref{prop:Ramsey_wrong_on_rationals_squared}.
	
	In \cref{sec:dense_rational_number_game} we show that the result of \cref{theo:rational_number_game_intro_statement} cannot be made stronger in the following sense:
	consider the \emph{dense $\KQ$-building game}:
	The Maker-Breaker game played on $\KQ$ where it is Maker's goal to finish a copy of the board as in the $\KQ$-building game with the additional property that the vertex set of Maker's copy is dense in the vertex set of the board.
	We prove that there is a winning strategy for Breaker in the dense $\KQ$-building game.
	
	\section{Preliminaries}\label{sec:Preliminaries}
	In this paper any graph theoretic notions not introduced are drawn from Diestel's book \cite{D17}.
	
	The games that we analyse are played between two players, Maker and Breaker.
	The players alternately take \emph{turns} that consist of claiming an edge from the \emph{board} $\KQ$ where $\KQ$ is the complete graph with the rational numbers $\mbbQ$ as the vertex set.
	They claim an edge by choosing an uncoloured edge of the board and colouring it in their respective colour.
	We will refer to Maker by \emph{she} or \emph{her} and similarly we refer to Breaker by \emph{he} or \emph{him}.
	The goal for Maker will be to colour the edges in such a way that her subgraph contains a $\KQ$, possibly with an additional property that we introduce in \cref{sec:dense_rational_number_game}.
	The goal of Breaker always is to simply stop Maker from achieving her defined goal.
	For any point in the game we define $E(\GM)$ to be the edges that have been coloured by Maker up to that turn and $V(\GM)$ to be the subset of vertices of the board that are incident with at least one edge of $E(\GM)$.
	With this we define Maker's subgraph $\GM := (V(\GM),E(\GM))$ of the board.
	We similarly define Breaker's subgraph $\GB$.
	For a vertex $v \in V(\GM)$ we define $\NM(v)$ to be the set of \emph{neighbours of $v$ in} $\GM$.
	When we say that a player \emph{connects} a vertex $v$ to a vertex $w$ in her or his turn then we mean that Maker or Breaker claims that edge in her or his turn respectively.
	
	By a \emph{fresh} vertex $v$ we mean a vertex that is not incident with a claimed edge of either player up to that point of the game, i.e. $v \notin V(\GM) \cup V(\GB)$.
	In Maker's strategy that we will describe in \cref{sec:rational_number_game}, Maker will add at most one fresh vertex to $\GM$ in each of her turns except for the first turn.
	We use this to obtain an enumeration of $V(\GM)$, that is $v_k$ stands for the $k$\textsuperscript{th} vertex added to $\GM$.
	For the first turn we will indicate a choice which of the vertices we call $v_1$ and which $v_2$.
	
	In \cref{sec:dense_rational_number_game} we present a winning strategy for Breaker in a variation of the game.
	This will be a \emph{pairing strategy}, that is we will find a family of disjoint pairs of edges of the board which Breaker will utilise in such a way that whenever Maker claims an edge of one of the defined pairs, then Breaker claims the other in his following turn.
	Such a strategy is then a winning strategy for Breaker if any subgraph of the board with which Maker could win the game includes at least one of the defined pairs.
	
	For a natural number $n\in \mbbN$ we define $[n] := \braces{1,\dots, n}$ and for a finite sequence $(a_1,a_2,\dots, a_n)$ of rational numbers and an element $a \in \mbbQ$ we define $(a_1,a_2,\dots, a_n)^\frown a := (a_1,a_2,\dots, a_n,a)$.
	
	For the rest of this \namecref{sec:Preliminaries} we give proofs for three folklore results.
	We prove them here to keep this paper self-contained.
	\begin{proposition}\label{prop:ordered_collection_of_intervals}
		There is a partition $\mcalP$ of $\mbbQ$ into pairwise disjoint open intervals and an order $\Pleq$ of $\mcalP$ defined by $P \Pleq Q$ if and only if either $P = Q$ or $p \leq q$ for every $p \in P$ and $q \in Q$ such that $(\mcalP,\Pleq)$ is order isomorphic to $(\mbbQ,\leq)$.
	\end{proposition}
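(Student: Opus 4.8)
The plan is to construct $\mcalP$ and the required isomorphism simultaneously by a back-and-forth style recursion, exploiting the characterisation that a countable dense linear order without endpoints is isomorphic to $(\mbbQ,\leq)$ (Cantor's theorem). First I would fix an enumeration $q_0, q_1, q_2, \dots$ of $\mbbQ$ and an enumeration $r_0, r_1, r_2, \dots$ of a second copy of $\mbbQ$, which will serve as the index set for the intervals of $\mcalP$. At stage $n$ I will have produced a finite set of pairwise disjoint open intervals $I_{s}$ indexed by a finite subset $S_n$ of the second copy of $\mbbQ$, together with the partial information that $I_s \Pleq I_t$ iff $s \leq t$ in the second copy; I maintain the invariant that the union $\bigcup_{s \in S_n} I_s$ together with the finitely many "gap" rationals that are the endpoints between consecutive chosen intervals covers $\{q_0, \dots, q_{n-1}\}$, and that between any two consecutive chosen intervals there is exactly one rational not yet assigned to any interval (a provisional endpoint), while the two unbounded sides are handled symmetrically.

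The forth step handles $q_n$: if $q_n$ already lies in some $I_s$ we do nothing; if $q_n$ is one of the provisional endpoints between consecutive intervals $I_s, I_t$ (or beyond all of them), we must eventually absorb it, so we pick a fresh index $u$ of the second copy strictly between $s$ and $t$ (this exists by density of the second copy), split the open region strictly between $I_s$ and $I_t$ into a new open interval $I_u$ containing $q_n$ plus two fresh provisional endpoints flanking it — this is possible because $\mbbQ$ between any two rationals (or on an unbounded ray) contains infinitely many rationals, so we can carve out a small open interval around $q_n$ and still leave a rational on each side. The back step handles $r_n$: if $r_n \notin S_n$, we create an interval with index $r_n$, placing it in the appropriate position among the existing $I_s$ by again splitting the relevant region of $\mbbQ$; here we must take care to place $I_{r_n}$ consistently with the order on the second copy, and to do so we use the provisional endpoints as the raw material, which is why the invariant insists on their presence.

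After $\omega$ stages, set $\mcalP = \{I_s : s \in \mbbQ_{\text{copy}\,2}\}$ and $\Pleq$ the induced order. The forth steps guarantee every rational is covered, so $\mcalP$ partitions $\mbbQ$; the intervals are open and pairwise disjoint by construction; the map $s \mapsto I_s$ is by construction an order isomorphism from the second copy of $\mbbQ$ onto $(\mcalP, \Pleq)$, hence $(\mcalP,\Pleq) \cong (\mbbQ,\leq)$. I would then check that $\Pleq$ as defined in the statement — $P \Pleq Q$ iff $P = Q$ or every element of $P$ is below every element of $Q$ — really is a linear order on $\mcalP$: this holds because the $I_s$ are pairwise disjoint open intervals of a linear order, so any two of them are separated, giving trichotomy.

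The main obstacle will be bookkeeping in the back step: when we insert a new interval with a prescribed index $r_n$, we need its position among the already-placed intervals to match the order of the second copy, and we need to do the insertion without disturbing intervals already committed. Keeping the "one spare rational between consecutive intervals" invariant alive through both the forth splits and the back insertions is the delicate point, but it is purely a matter of always carving small enough intervals — between any two rationals there is always enough room. An alternative, slightly cleaner route I would consider is to skip the explicit back-and-forth: build $\mcalP$ by any greedy recursion that just ensures every $q_n$ gets covered and that infinitely many intervals are created with the order type being dense without endpoints, then invoke Cantor's theorem directly on $(\mcalP,\Pleq)$. That reduces the problem to verifying density and no-endpoints for $\Pleq$, which is easier than maintaining a running isomorphism, at the cost of not exhibiting the isomorphism explicitly — but the statement only asserts existence, so this suffices.
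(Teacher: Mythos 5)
There is a genuine gap here, and it is not just bookkeeping: the construction cannot work as described because it tries to assemble the partition from open intervals that are ``committed'' once created and that leave single rationals between consecutive pieces. Observe that in any partition of $\mathbb{Q}$ into more than one pairwise disjoint open interval, no partition class can have a rational supremum: if a class $I$ has $\sup I = b \in \mathbb{Q}$, then $b \notin I$, so $b$ lies in some other class $J = (c,d)$ with $c < b < d$, and then the nonempty set of rationals just below $b$ lies in both $I$ and $J$, contradicting disjointness. Hence the classes of any such partition necessarily have irrational (or infinite) boundary points. Your invariant, however, keeps exactly one unassigned rational (a ``provisional endpoint'') between consecutive committed intervals, which forces consecutive intervals to share a rational boundary point $b$; the forth step then proposes to carve an open interval containing $b$ out of ``the open region strictly between $I_s$ and $I_t$'', but that region is the single point $\lbrace b\rbrace$, and no open interval containing $b$ can be disjoint from both neighbours. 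Relaxing the invariant so that gaps contain more rationals does not repair this: each newly inserted interval is again flanked by fresh provisional rational endpoints, and by the observation above a rational boundary point of a committed interval can never be absorbed by any later interval. The same objection defeats the ``cleaner'' greedy alternative you sketch at the end. A correct recursive construction would have to let each interval grow over infinitely many stages towards an irrational limit, which is a substantially different and more delicate argument than the one outlined.

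The paper avoids all of this with a two-line transport argument: the lexicographic order on $\mathbb{Q}^2$ is a countable dense linear order without endpoints, hence order isomorphic to $(\mathbb{Q},\leq)$ via some $f$ by Cantor's theorem, and the images $f[\lbrace q\rbrace \times \mathbb{Q}]$ of the columns form the desired partition: each image is convex with no greatest or least element, i.e.\ an open interval with irrational or infinite boundary, and the order on the set of columns is inherited from the first coordinate, so it is again of type $\mathbb{Q}$. The reason the isomorphism comes for free there is that the partition is defined \emph{after} the isomorphism rather than built up from finitely presented pieces; if you want to salvage a back-and-forth proof, that is the structural point your construction is missing.
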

	Note that since any open interval of $\mbbQ$ is again isomorphic to $\mbbQ$ (since any two countable dense total orders without smallest and largest element are isomorphic, see \cite{BMMN06}*{Theorem 9.3}), there also exists such a collection for any open interval of $\mbbQ$.
	\begin{proof}
		Let $\preceq$ be the lexicographic order on $\mbbQ^2$ and $f \colon (\mbbQ^2, \preceq) \to (\mbbQ,\leq)$ be an order isomorphism.
		Then $\mcalP := \braces{f[\braces{q}\times \mbbQ] \colon q \in \mbbQ}$ is as desired as $\Pleq$ inherits its properties from $\leq$.
	\end{proof}
	\begin{proposition}\label{prop:Ramsey_true_on_rationals}
		For every $2$-colouring of the rational numbers there is a monochromatic subset that is isomorphic to the rational numbers.
	\end{proposition}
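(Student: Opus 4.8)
The plan is to use the classical characterisation of $(\mbbQ,\leq)$ as the unique countable dense total order without endpoints (Cantor's theorem, cited as \cite{BMMN06}*{Theorem 9.3} above): so it suffices to produce, inside one colour class, a subset which is countable (automatic), densely ordered, and without smallest or largest element. Fix a $2$-colouring $c\colon \mbbQ \to \{0,1\}$.

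First I would observe that at least one colour class is dense in some open interval. More precisely, I claim there is an open interval $I$ of $\mbbQ$ and a colour $i$ such that $c^{-1}(i)$ is dense in $I$. Indeed, if colour $1$ is \emph{not} dense in any open interval, then every open interval contains an open subinterval entirely of colour $0$; but then, starting from all of $\mbbQ$ and iterating, one easily builds a colour-$0$ subset that is dense in $\mbbQ$ — so take $I = \mbbQ$ and $i = 0$. Otherwise colour $1$ is dense in some open interval $I$, and we take $i = 1$. (In fact the cleaner formulation: let $i$ be a colour such that $c^{-1}(i)$ is somewhere dense, which must exist since $\mbbQ = c^{-1}(0)\cup c^{-1}(1)$ cannot have both classes nowhere dense, as $\mbbQ$ is not meagre in itself — but to stay self-contained I would argue directly as above rather than invoke Baire category.)

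Next, with such an $I$ and $i$ fixed, I would build the desired monochromatic copy of $\mbbQ$ as a back-and-forth / greedy construction inside $I$. Enumerate $\mbbQ = \{q_1, q_2, \dots\}$; we construct an increasing sequence of finite subsets $S_0 \subseteq S_1 \subseteq \cdots$ of $c^{-1}(i)\cap I$ such that $S = \bigcup_n S_n$ is dense-in-itself and without endpoints. At stage $n$ we have a finite set $S_{n-1}$, which partitions $I$ (together with the endpoints of $I$, treated as formal $\pm\infty$) into finitely many open "gaps"; we must enrich $S_{n-1}$ by adding, for each gap, a point of colour $i$ strictly inside it, which is possible precisely because $c^{-1}(i)$ is dense in $I$; we also add colour-$i$ points below $\min S_{n-1}$ and above $\max S_{n-1}$ inside $I$. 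Doing this at every stage guarantees that between any two points of $S$ there is a third, and that $S$ has no least or greatest element, so $(S,\leq)$ is a countable dense order without endpoints, hence order-isomorphic to $(\mbbQ,\leq)$, and it is monochromatic by construction.

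The main obstacle is really the first step — establishing that one colour class is somewhere dense (or dense in $\mbbQ$) — and making that argument genuinely elementary rather than a Baire-category black box. I expect to handle it by the dichotomy sketched above: either one colour is dense in an interval and we localise there, or one colour meets every interval only "sparsely", in which case the complementary colour contains an interval-dense set which we grow to a copy of $\mbbQ$ using \cref{prop:ordered_collection_of_intervals} to organise the infinitely many gaps that need filling. Once somewhere-density is in hand, the back-and-forth construction is routine bookkeeping.
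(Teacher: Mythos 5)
Your argument is correct, but it takes a genuinely different route from the paper's. The paper applies \cref{prop:ordered_collection_of_intervals} to fix a partition $\mcalP$ of $\mbbQ$ into open intervals with $(\mcalP,\Pleq)$ order isomorphic to $(\mbbQ,\leq)$, and then distinguishes two cases: if some $P\in\mcalP$ contains no red element, then $P$ itself is an entirely blue open interval and hence a copy of $\mbbQ$ by Cantor's isomorphism theorem; otherwise one red element chosen from each $P$ gives a red set order isomorphic to $(\mcalP,\Pleq)\cong(\mbbQ,\leq)$, with no further construction required. You instead run a somewhere-dense versus nowhere-dense dichotomy over arbitrary intervals to obtain a colour $i$ and an open interval $I$ in which $c^{-1}(i)$ is dense, and then build a monochromatic countable dense order without endpoints by greedily filling gaps, invoking Cantor's theorem only at the end. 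Both proofs are sound, and the dichotomies are genuinely different: yours is about density in arbitrary intervals, the paper's is tied to the fixed partition $\mcalP$. What your approach buys is independence from \cref{prop:ordered_collection_of_intervals} --- the appeal to it at the end of your sketch is in fact unnecessary, since once a colour class is dense in $I$ the gap-filling handles all gaps uniformly --- at the price of an explicit recursive construction and the verification that the limit set is dense in itself and has no endpoints; the paper trades that construction for the single structural fact of \cref{prop:ordered_collection_of_intervals}, after which each case is a one-line observation. Two small presentational points: the enumeration $\mbbQ=\braces{q_1,q_2,\dots}$ you fix is never used, and you should specify $S_0$ (e.g.\ a single point of $c^{-1}(i)\cap I$, which exists by density).
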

	\begin{proof}
		Choose a collection $(\mcalP,\Pleq)$ as in \cref{prop:ordered_collection_of_intervals} and let a $2$-colouring of $\mbbQ$ be given.
		Call the colours red and blue.
		First suppose that there is a set $P\in \mcalP$ that contains no red element.
		Then $P$ is an open blue interval, which is isomorphic to $(\mbbQ,\leq)$ by Cantor's isomorphism theorem \cite{BMMN06}*{Theorem 9.3}.
		Now suppose that every $P \in \mcalP$ contains a red element.
		Then choose such an element from each $P$.
		The union of these elements together with the order $p \leq q \Leftrightarrow P \leq Q$ where $p \in P \in \mcalP$ and $q \in  Q \in \mcalP$, is then isomorphic to $(\mbbQ,\leq)$.
	\end{proof}
	\begin{proposition}\label{prop:Ramsey_wrong_on_rationals_squared}
		There is a $2$-edge-colouring of $\KQ$ such that there is no monochromatic subgraph $\KQ$.
	\end{proposition}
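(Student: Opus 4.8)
The plan is to write down an explicit Sierpiński-style colouring coming from an enumeration of $\mbbQ$ and to argue that each colour class can only contain well-ordered (respectively converse well-ordered) vertex sets, neither of which can be order isomorphic to $\mbbQ$.

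Concretely, I would fix a bijection $e \colon \mbbQ \to \mbbN$ and colour an edge $\braces{p,q}$ of $\KQ$, where $p < q$, \emph{red} if $e(p) < e(q)$ and \emph{blue} if $e(p) > e(q)$; this is well defined because $e$ is injective and $p \neq q$. Suppose, for contradiction, that there is a set $S \subseteq \mbbQ$ inducing a monochromatic copy of $\KQ$; by definition of such a copy, $(S,\leq)$ is order isomorphic to $(\mbbQ,\leq)$. If $S$ is red, then for all $p < q$ in $S$ we have $e(p) < e(q)$, so the restriction of $e$ to $S$ is an order embedding of $(S,\leq)$ into $(\mbbN,\leq)$; hence $(S,\leq)$ is a well-order, contradicting $(S,\leq) \cong (\mbbQ,\leq)$, since $\mbbQ$ contains, e.g., the strictly decreasing sequence $\braces{1/n \colon n \in \mbbN}$ and so is not well-ordered. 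If $S$ is blue, then symmetrically the restriction of $e$ to $S$ reverses order, so $(S,\geq)$ embeds into $(\mbbN,\leq)$ and is therefore a well-order; but $(S,\geq) \cong (\mbbQ,\geq) \cong (\mbbQ,\leq)$, which is again not a well-order — a contradiction. Hence no monochromatic $\KQ$ exists.

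I do not expect a genuine obstacle here: the entire content is the choice of colouring, and once it is fixed the verification is a short case distinction. The only point requiring a little care is to treat the two colours symmetrically and to invoke the elementary fact that $(\mbbQ,\leq)$ is neither well-ordered nor converse well-ordered; equivalently, one can handle both cases at once by observing that a monochromatic copy of $\KQ$ together with $e$ would produce either a strictly increasing or a strictly decreasing $\omega$-indexed sequence in $(\mbbN,\leq)$ running the wrong way, which is impossible.
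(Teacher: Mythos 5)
Your proposal is correct and is essentially the same argument as the paper's: both use the Sierpi\'nski colouring that compares the usual order on $\mbbQ$ with the order induced by an enumeration, and both conclude that a monochromatic copy would be well-ordered or conversely well-ordered, contradicting $(\mbbQ,\leq)$ having no least or greatest element. The only difference is cosmetic (which colour class is which).
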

	\begin{proof}
		First note that $\mbbN$ and $\mbbQ$ are not order isomorphic, since $\mbbN$ has a smallest element, but $\mbbQ$ does not.
		We will use this for a contradiction.
		Let $(q_i)_{i\in \mbbN}$ be an enumeration of $\mbbQ$.
		We colour $q_iq_j \in E(\KQ)$ blue if either $q_i \leq q_j$ and $i \leq j$ or $q_j \leq q_i$ and $j \leq i$.
		Otherwise we colour it red.
		Now suppose that there is a subset $Q$ of $\mbbQ$ that is isomorphic to $(\mbbQ,\leq)$ such that the complete graph induced by $Q$ is monochromatic.
		This implies that $Q$ is also order-isomorphic to $\mbbN$ or its reversal, a contradiction.
	\end{proof}
	
	\section{The rational number game}\label{sec:rational_number_game}
	In this \namecref{sec:rational_number_game} we present a winning strategy for Maker in a variant of the $\Kalephzero$-game.
	We will do this by first describing a strategy according to which Maker should play and then showing in the proof of \cref{theo:rational_number_game} that there actually is a subgraph with the desired property if Maker adheres to the strategy.
	During the game Maker wants to switch between adding fresh vertices to $\GM$ and further connecting vertices of $\GM$ to others in a structured fashion.
	We will make this more precise later.
	
	Let us define the \emph{$\mbbQ$-game strategy}:
	At the beginning of the game Maker chooses 
	\begin{itemize}
		\item a partition $\mcalP$ of the interval $(0,1) \subseteq \mbbQ$ into open intervals such that there is an order $\leq_{\mcalP}$ of $\mcalP$ induced by the regular order on $\mbbQ$ such that $(\mcalP, \leq_{\mcalP})$ is order isomorphic to $(\mbbQ,<)$ where ``$<$'' is the usual ordering (this is possible by \cref{prop:ordered_collection_of_intervals}),
		\item an enumeration $(P_i)_{i \in \mbbN}$ of $\mcalP$, and
		\item a sequence $(n_i)_{i \in \mbbN}$ of natural numbers such that every finite sequence of natural numbers appears infinitely often as a consecutive subsequence.
	\end{itemize}
	
	Informally speaking, by containing vertices of sufficiently many different intervals in $\mcalP$, Maker can ensure that a subset of $V(\GM)$ actually is isomorphic to $\mbbQ$, as $(\mcalP,\leq_\mcalP)$ is isomorphic to $(\mbbQ,\leq)$.
	The enumeration $(P_i)_{i \in \mbbN}$ together with the sequence $(n_i)_{i\in\mbbN}$ will be used to ensure that different intervals $P_i$, $P_j$ are well connected to ensure that there actually is a complete graph using vertices of many different $P_i$.
	
	In her first turn, Maker claims the edge $\braces{0,1}$ for herself and sets $v_1 = 0$ and $v_2 = 1$.
	In a later turn, suppose $V(\GM) = \braces{v_1, \dots, v_k}$ where $v_i$ is the $i$\textsuperscript{th} vertex that Maker added to $\GM$.
	For every vertex $v \in V(\GM)$ we define a finite sequence $S_v = (v_{i_1}, \dots , v_{i_\ell}) $ of the vertices of $\NM(v)$ that were added to $\GM$ before $v$ that represents the order in which Maker claimed the edges $vv_{i_j}$.
	The $\ell$ which appears here was the degree of $v$ in $\GM$ when Maker first chose a new fresh vertex after $v$.
	Maker uses the sequence $S_{v_k}$ to determine from which interval $P \in \mcalP$ to choose the vertex that she plays to next:
	suppose that $v_k \in P \in \mcalP$ and $S_{v_k} = (v_{i_1}, \dots, v_{i_\ell})$.
	Define $L := \braces{v \in P \cap V(\GM) \colon S_v \vert_\ell = S_{v_k}}$, i.e. $L$ is the set of vertices of $\GM$ that come from the same partition class of $\mcalP$ as $v_k$ and in their first $\ell$ moves of being connected to $\GM$, Maker connected them to $\GM$ in the same manner as $v_k$.
	Consider the $(\verts{L}+1)$\textsuperscript{st} time that $(i_1,i_2,\dots , i_\ell)$ appears in $(n_i)_{i\in\mbbN}$ as a subsequence and let $n$ be the number appearing next.
	Then Maker wants to play to a vertex of $P_n$ next.
	
	For that purpose she considers the vertices $v_i$ such that
	\begin{enumerate}[label={\textbf{(\arabic*)}},ref={(\arabic*)}]
		\item\label{item:strat:lies_in_relevant_partition_class} $v_i \in P_n \cap V(\GM)$,
		\item\label{item:strat:has_played_likewise} $S_{v_i} \vert_\ell = S_{v_k}$, and
		\item\label{item:strat:size_of_set} there are at most $\ell\cdot (\ell+1) $ vertices $v_j\in V(\GM)$  with $j < i$ satisfying \labelcref{item:strat:lies_in_relevant_partition_class,item:strat:has_played_likewise}.
	\end{enumerate}
	Let us call this set $F$.
	If $F$ contains fewer than $\ell (\ell+1)+1 $ vertices, Maker chooses a fresh vertex $v_{k+1} \in P_{n_{k+1}}$, claims $v_1v_{k+1}$ and begins the aforementioned process for $v_{k+1}$.
	Otherwise, $F$ has size precisely $\ell (\ell+1)+1 $ and Maker continues by defining a total order on $F$ via
	\begin{equation}\label{equ:order_to_play_balanced}
		v_i < v_j :\Leftrightarrow \begin{cases} \verts{\NM(v_i) \cap L } < \verts{\NM(v_j) \cap L } &, \text{or} \\ \verts{\NM(v_i) \cap L } = \verts{\NM(v_j) \cap L} \text{ and } i < j & \end{cases}.
	\end{equation}
	Maker claims $v_iv_k$ such that $v_i$ is minimal with respect to that order and $v_iv_k$ is unclaimed.
	Note that this choice is unique.
	In fact the second clause in \labelcref{equ:order_to_play_balanced} is only there to make this true.
	
	Informally, Maker will use this order to play to the vertices of $F$ in a `balanced fashion', that is, Maker connects $v_k$ to an available vertex $v_i$ with the smallest possible number of neighbours that behave like $v_k$ in the sense that $S_{v_i}\vert_\ell = S_{v_k}\vert_\ell$.
	This is useful:
	consider an infinite set $W$ of vertices of which each vertex was treated by Maker just like $v_k$ until it had degree $\ell$ in $\GM$. Then, regardless of how Breaker plays, for all but at most $\ell$ many vertices $v$ of $F$ there are infinitely many vertices in $W$ that get connected to $v$.
	
	\begin{theorem}\label{theo:rational_number_game}
		The $\mbbQ$-game strategy is a winning strategy for Maker in the complete rational number game.
	\end{theorem}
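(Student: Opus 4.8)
The plan is to check first that the $\mbbQ$-game strategy always prescribes a legal move, and then that if Maker follows it then $\GM$ contains a copy of the board at the end of the game. For the first point the crucial estimate is that when Maker comes to extend the neighbour-sequence of $v_k$ from length $\ell$ to length $\ell+1$, she has so far played exactly $\ell$ edges at $v_k$, while Breaker — whose turns have been interleaved with hers since $v_k$ was introduced — has played at most $\ell$; moreover none of Maker's $\ell$ edges lies in $F$, because each vertex listed in $S_{v_k}$ has a neighbour-sequence that already differs from $S_{v_k}$. Since $\verts{F}=\ell(\ell+1)+1>2\ell$, at least one edge from $v_k$ into $F$ is still unclaimed. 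One also checks that Maker adds infinitely many fresh vertices: while she keeps working on a single $v_k$, every set $F$ she considers lies among the finitely many vertices older than $v_k$, which bounds $\ell$, so she must eventually move on. Thus the fresh vertices are exactly $v_3,v_4,\dots$, and since every value occurs infinitely often in $(n_i)_{i\in\mbbN}$, each class $P\in\mcalP$ acquires infinitely many vertices.

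For the second point, call a class $P\in\mcalP$ \emph{$\sigma$-rich}, for a finite vertex-sequence $\sigma$, if infinitely many vertices of $P$ have $\sigma$ as an initial segment of their neighbour-sequence. The decisive observation is immediate from the shape of $F$: if some vertex has $\sigma=(v_1,w_1,\dots,w_r)$ as an initial segment of its neighbour-sequence, then every $w_i$ has $(v_1,w_1,\dots,w_{i-1})$ as an initial segment of \emph{its} neighbour-sequence, so $w_i$ is joined by Maker to $w_1,\dots,w_{i-1}$ and $\braces{w_1,\dots,w_r}$ is a Maker-clique. Hence it suffices to produce an infinite sequence $w_1,w_2,\dots$ of Maker's vertices lying in pairwise distinct classes such that $\braces{P(w_1),P(w_2),\dots}$, with the order inherited from $(\mcalP,\Pleq)$, is a dense linear order without endpoints; then $Q=\braces{w_1,w_2,\dots}$ induces a complete subgraph of $\GM$, and since the $w_i$ lie in distinct intervals the order on $Q$ induced by $\mbbQ$ is exactly this order, which by Cantor's isomorphism theorem is isomorphic to $(\mbbQ,\le)$.

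The sequence is built by an inductive extension lemma run along a back-and-forth schedule. The invariant carried along is, in outline, that after $r$ steps one has $w_1,\dots,w_r$ as above together with a reservoir $\mathcal M_r\subseteq\mcalP$ that is order-isomorphic to $\mbbQ$, consists entirely of $\sigma_r$-rich classes (for $\sigma_r=(v_1,w_1,\dots,w_r)$), and contains $P(w_1),\dots,P(w_r)$, so that $\mathcal M_r$ meets every open gap of $(\mcalP,\Pleq)$ cut out by the placed classes. To extend into a prescribed gap $I$ one picks a target class $P_t\in I\cap\mathcal M_r$; the set $F$ of the first $\ell(\ell+1)+1$ vertices of $P_t$ with initial segment $\sigma_r$ (here $\ell=\verts{\sigma_r}$) is complete because $P_t$ is $\sigma_r$-rich. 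Since every finite string — in particular the index pattern of $\sigma_r$ followed by an arbitrary value — occurs infinitely often in $(n_i)_{i\in\mbbN}$, for each $P\in\mathcal M_r$ infinitely many of its $\sigma_r$-initial-segment vertices have their next move directed by Maker into $P_t$, hence into $F$; and the balanced rule \labelcref{equ:order_to_play_balanced} guarantees a ``bad'' set $B_P\subseteq F$ with $\verts{B_P}\le\ell$ such that every $v\in F\setminus B_P$ receives edges from infinitely many of those vertices, i.e.\ makes $P$ into a $\sigma_r{}^\frown v$-rich class. As $\verts{F}=\ell(\ell+1)+1$ exceeds the at most $\ell^2$ vertices ruled out by $B_{P_t}$ together with $B_{P(w_1)},\dots,B_{P(w_r)}$, some ``valid'' $v\in F$ avoids all of them; colouring each $P\in\mathcal M_r$ by the least valid $v$ outside $B_P$ and applying the finite-colour form of \cref{prop:Ramsey_true_on_rationals} to $\mathcal M_r\cong\mbbQ$ produces a monochromatic, still dense-without-endpoints subfamily whose colour is a single vertex $w_{r+1}\in F\subseteq P_t$; together with the now $\sigma_{r+1}$-rich placed classes this yields $\mathcal M_{r+1}$, and running the schedule to the end gives the required $Q$.

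The main obstacle is exactly this extension step, and within it the need to keep the reservoir a $\mbbQ$-like family that still sees every gap once it has been thinned. This is why one must both invoke \cref{prop:Ramsey_true_on_rationals} — to preserve order-density without endpoints under the thinning — and work with the quadratic threshold $\ell(\ell+1)+1$, chosen precisely so as to outlast the deletion of the roughly $\ell$ bad sets, each of size about $\ell$, coming from the target class and the already-placed vertices; the linear bound $2\ell$ that sufficed for well-definedness is not enough here. Pinning down the exact invariant — in particular that the placed classes remain inside the reservoir and that the back-and-forth bookkeeping stays consistent with this counting — is the delicate part, while the rest is routine.
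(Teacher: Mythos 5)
Your overall architecture is the right one and matches the paper's: an invariant asserting that for the growing clique $v_1,w_1,\dots,w_r$, placed in distinct and densely arranged classes of $\mcalP$, there remain many classes containing infinitely many vertices whose neighbour-sequence begins with $(v_1,w_1,\dots,w_r)$; the observation that the sequence $(n_i)_{i\in\mbbN}$ sends infinitely many such vertices of each rich class into the target set $F$; the bound of $\ell$ on the ``bad'' vertices of $F$ supplied by the balanced rule \labelcref{equ:order_to_play_balanced}; and a finite-colour application of \cref{prop:Ramsey_true_on_rationals} combined with a counting argument over $F$. Your preliminary checks (that every prescribed move is legal, and that Maker introduces infinitely many fresh vertices and hence infinitely many vertices of each class) are correct and are points the paper leaves implicit.

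There is, however, a genuine gap at exactly the step you single out as delicate. You colour each $P\in\mathcal{M}_r$ by a single vertex of $F$ and apply \cref{prop:Ramsey_true_on_rationals} once, to the whole reservoir $\mathcal{M}_r$, asserting that the monochromatic subfamily is ``still dense-without-endpoints''. It is dense without endpoints as an order in its own right, but that is not what your invariant needs: the thinned reservoir must keep meeting \emph{every} gap cut out by the placed classes. \cref{prop:Ramsey_true_on_rationals} only produces \emph{some} $\mbbQ$-isomorphic monochromatic subset, which may lie entirely inside one gap or entirely above all placed classes, leaving other gaps with no rich classes; the next time the back-and-forth schedule asks you to extend into such a gap there is no admissible target $P_t$. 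Nor can you simply rerun the proposition separately inside each gap with your colouring, since different gaps would then return different ``least valid'' vertices of $F$, while you need one common $w_{r+1}$. The paper resolves this by keeping one reservoir family per gap (the sets $\mcalQ^i_k$), colouring each class by its entire bad $k$-subset of $F$ (hence $\binom{k(k+1)+1}{k}$ colours) rather than by a preferred vertex, applying the Ramsey step once in each of the $k+1$ gaps to obtain monochromatic $\mbbQ$-subfamilies with known bad sets, and only then choosing the new clique vertex in $F$ avoiding all $k+1$ of these sets; this is where the threshold $k(k+1)+1>k(k+1)$ is genuinely used. Note also that your budget of $\ell+1$ exclusion sets (target plus placed classes) leaves no room for an additional bad set per gap, so the repair is not cosmetic: you should drop the requirement that the placed classes themselves remain rich (the paper never needs it) and instead spend the exclusions gap by gap.
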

	Note that \cref{theo:rational_number_game} implies \cref{theo:rational_number_game_intro_statement}.
	\begin{proof}
		Suppose that Maker and Breaker, playing the complete rational number game against each other, have just completed a game, in which Maker played according to the $\mbbQ$-game strategy.
		We begin by fixing a sequence $(a_i)_{i \in \mbbN}$ of natural numbers.
		This sequence should contain every number infinitely often and each $a_i$ should be at most $i$.
		Moreover, we reuse the partition $\mcalP$ and its enumeration $(P_i)_{i \in \mbbN}$ from the $\mbbQ$-game strategy.
		We recursively build for every $m \in \mbbN$ a complete graph $K^m$ and a set $W_m$ such that
		\begin{enumerate}[label={\textbf{(\alph*)}},ref={(\alph*)}]
			\item\label{item:complete_graph_chain} $K^{m-1}\subseteq K^m \subseteq \GM$ with $V(K^m) = \braces{u_1,\dots,u_m}$, and
			\item\label{item:reservoir} $W_{m} = \braces{w \in V(\GM) \colon S_w \vert_m = (u_1, \dots , u_m)} $.
		\end{enumerate}
		For every vertex $u_i$ in $(u_1, \dots, u_m)$ we denote by $R_{i}$ the partition class of $\mcalP$ that contains $u_i$.
		We define
		\begin{align}\label{equ:definition_of_intervals}
			\begin{split}
				\mcalQ^{i}_m := \braces{ P\in \mcalP \colon & \verts{W_m \cap P} = \aleph_0 \text{ and } R_{i} <_{\mcalP} P  \text{ and} \\
					&\text{there is no } u_j \text{ with } R_{i} <_{\mcalP} R_{j} <_{\mcalP} P}.
			\end{split}
		\end{align}
		With these definitions we also require $W_m$ to satisfy that
		\begin{enumerate}[resume*]
			\item\label{item:Q_everywhere} $\mcalQ^{i}_m$ contains a subset that, together with the order induced by $<_{\mcalP}$, is order isomorphic to $(\mbbQ,<)$ for every $i \in [m]$, and
			\item\label{item:choose_where_to_play} $u_{m}\in \bigcup \mcalQ^{a_{m-1}}_{m-1}$, if $m > 1$.
		\end{enumerate}
		
		Clearly, \labelcref{item:complete_graph_chain} implies that $\bigcup_{m \in \mbbN}K^m = \Kalephzero$ is a complete graph contained in Maker's subgraph $\GM$.
		By \labelcref{item:choose_where_to_play} we add a vertex $u_h$ between any pair of vertices $u_i,u_j \in \Kalephzero$, thus $V(\bigcup_{m \in \mbbN}K^m)$ contains a subset that is order isomorphic to $(\mbbQ,\leq)$.
		The subgraph induced by this subset is the desired $\KQ$.
		Properties \labelcref{item:reservoir,item:Q_everywhere} are needed to ensure that the recursion can be continued indefinitely.
		
		\textbf{Recursion start:}
		Recall that $v_1=0$ was added to $\GM$ as the first vertex according to the $\mbbQ$-game strategy.
		We set $K^1 := (\braces{v_1},\emptyset)$ and $W_1 := V(\GM)\setminus \braces{v_1}$.
		The requirements of \labelcref{item:complete_graph_chain,item:choose_where_to_play} are empty in the initial step and therefore satisfied.
		Property \labelcref{item:reservoir} is true for $W_1$, as according to the $\mbbQ$-game strategy every vertex of $\GM$ other than $v_1$ is connected to $v_1$ first.
		Lastly, for every $P \in \mcalP$ Maker added infinitely many vertices of $P$ to $\GM$, as the $i$\textsuperscript{th} vertex she adds is a vertex of $P_{n_i}$ and $(n_i)_{i\in\mbbN}$ contains every number infinitely often.
		This, together with the definition of the $\mcalQ^i_m$ implies that $\mcalQ^1_1$ contains every $P \in \mcalP $ with $P_i < P$ and therefore $\mcalQ^1_1$ satisfies \labelcref{item:Q_everywhere}.
		
		\textbf{Recursion step:}
		Let $k \geq 1$ and let $K^k$ and $W_k$ satisfying \labelcref{item:choose_where_to_play,item:complete_graph_chain,item:Q_everywhere,item:reservoir} be given.
		We demonstrate how we can find a vertex $u_{k+1}$ and a set $W_{k+1}$ such that $K^{k+1} := G[V(K^k) \cup \braces{u_{k+1}}]$ and $W_{k+1}$ comply with \labelcref{item:complete_graph_chain,item:choose_where_to_play,item:Q_everywhere,item:reservoir}.

		By $\labelcref{item:Q_everywhere}$, $\mcalQ^{a_k}_k$ contains a subset that is order isomorphic to $(\mbbQ,<)$.
		Let ${\mcalQ}$ be such a subset and $\NameToWhereWePlay$ be the first element of $(a_i)_{i \in \mbbN}$ such that $P_\NameToWhereWePlay $ is an element of ${\mcalQ}$ but contains no vertex of $V(K^k)$.
		We choose $\NameToWhereWePlay$ in such a way because this ensures that there is a partition of $\mcalQ^{a_k}_k \setminus \braces{P_\NameToWhereWePlay}$ into two subsets $\mcalQ^{-}$ and $\mcalQ^{+}$, both containing a $\mbbQ$-isomorphic subset respectively, such that all $Q^{-} \in \mcalQ^{-}$ and $Q^{+} \in \mcalQ^{+}$ fulfil $Q^{-} <_{\mcalP} P_\NameToWhereWePlay <_{\mcalP} Q^{+}$.
		We will choose a vertex of $P_\NameToWhereWePlay$ as $u_{k+1}$.
		This choice ensures \labelcref{item:choose_where_to_play}.
		
		Next we want to suitably restrict $W_k$ to a set of vertices that were connected to a vertex of $P_\NameToWhereWePlay$, but we also need to make sure that we can preserve \labelcref{item:Q_everywhere} for the subsequent steps.
		For this purpose we let 
		\begin{align*}
			W' := \braces{w \in W_k \colon \text{there is } v \in P_\NameToWhereWePlay \text{ such that } S_w\vert_{k+1} = (u_1, \dots , u_k, v)}
		\end{align*}
		and use this to define $\hat{\mcalQ}^i$ similarly to the definition in \labelcref{equ:definition_of_intervals}:
		for $i \in [k]$ let $R_i$ be the partition class of $\mcalP$ that contains $u_i$ and set $R_{k+1} := P_\NameToWhereWePlay$.
		With this, for $i \in [k+1]$, we define
		\begin{align*} 
			\hat{\mcalQ}^i := \braces{ P\in \mcalP \colon & \verts{W' \cap P} = \aleph_0 \text{ and } R_{i} <_{\mcalP} P  \text{ and} \\
				&\text{there is no } j\in [k+1] \text{ with } R_{i} <_{\mcalP} R_{j} <_{\mcalP} P}\, .
		\end{align*}
		\begin{claim}\label{cla:Q_i_contains_a_copy_of_Q}
			$\hat{\mcalQ}^i$ contains a subset that is order isomorphic to $(\mbbQ,<)$ for every $i \in [k+1]$.
		\end{claim}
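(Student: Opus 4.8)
The plan is to produce, for each $i\in[k+1]$, a subset of $\hat{\mcalQ}^i$ that is order isomorphic to $(\mbbQ,<)$, by comparing $\hat{\mcalQ}^i$ with the sets $\mcalQ^i_k$, $\mcalQ^{-}$ and $\mcalQ^{+}$ already at hand and then invoking one ``reservoir survival'' fact. First I would record the positions of the marked classes. By the choice of $\NameToWhereWePlay$ the class $P_\NameToWhereWePlay$ contains no vertex of $V(K^k)$, and since the class of each $u_m$ with $m\le k$ was likewise chosen to contain none of $u_1,\dots,u_{m-1}$, the classes $R_1,\dots,R_{k+1}$ are pairwise distinct. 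Moreover $P_\NameToWhereWePlay\in\mcalQ^{a_k}_k$ says exactly that $P_\NameToWhereWePlay$ lies in the gap of $(\mcalP,<_{\mcalP})$ immediately above $R_{a_k}$, so adjoining $R_{k+1}=P_\NameToWhereWePlay$ to the list of marked classes leaves the gap immediately above $R_i$ unchanged for every $i\in[k]\setminus\{a_k\}$ and splits the gap immediately above $R_{a_k}$ into its part below $P_\NameToWhereWePlay$ and its part above. A short order-theoretic check against \eqref{equ:definition_of_intervals} then shows that the class-position requirement appearing in $\hat{\mcalQ}^i$ is exactly the one in $\mcalQ^i_k$ when $i\in[k]\setminus\{a_k\}$, is met by every class of $\mcalQ^{-}$ when $i=a_k$, and is met by every class of $\mcalQ^{+}$ when $i=k+1$. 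Accordingly I would take $\mathcal{S}_i$ to be a $(\mbbQ,<)$-isomorphic subset of $\mcalQ^i_k$ for $i\in[k]\setminus\{a_k\}$ (which exists by \labelcref{item:Q_everywhere} for $W_k$), of $\mcalQ^{-}$ for $i=a_k$, and of $\mcalQ^{+}$ for $i=k+1$ (both available by the splitting chosen together with $\NameToWhereWePlay$); then $\mathcal{S}_i$ meets the class-position requirement of $\hat{\mcalQ}^i$ and every $P\in\mathcal{S}_i$ satisfies $\verts{W_k\cap P}=\aleph_0$.

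It then remains to pass from $W_k$ to $W'$, for which I would establish the reservoir survival fact that $\verts{W'\cap P}=\aleph_0$ for every $P\in\mcalP$ with $\verts{W_k\cap P}=\aleph_0$; granting this, each $\mathcal{S}_i$ is already contained in $\hat{\mcalQ}^i$ and the claim follows. To prove the fact I would fix such a $P$, write $(i_1,\dots,i_k)$ for the indices of $u_1,\dots,u_k$ in Maker's enumeration of $V(\GM)$, and list $W_k\cap P$ as $w^{(1)},w^{(2)},\dots$ in the order Maker adds these vertices to $\GM$. When, playing the $\mbbQ$-game strategy, Maker considers her $(k+1)$\textsuperscript{st} move out of $w^{(j)}$, the set $L$ from the strategy equals $\{w^{(1)},\dots,w^{(j)}\}$, so she wants to play into the class $P_n$, where $n$ is the entry of $(n_i)_{i\in\mbbN}$ immediately after the $(j+1)$\textsuperscript{st} occurrence of $(i_1,\dots,i_k)$ as a consecutive subsequence of $(n_i)_{i\in\mbbN}$. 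Since $(i_1,\dots,i_k,\NameToWhereWePlay)$ occurs infinitely often as a consecutive subsequence of $(n_i)_{i\in\mbbN}$, there are infinitely many $j$ with $n=\NameToWhereWePlay$. For such a $j$ the set $F$ from the strategy consists of the first $k(k+1)+1$ vertices of $\GM$, in order of addition, that lie in $P_\NameToWhereWePlay$ and satisfy $S_v\vert_k=(u_1,\dots,u_k)$ at that moment; because $\verts{W_k\cap P_\NameToWhereWePlay}=\aleph_0$ (as $P_\NameToWhereWePlay\in\mcalQ^{a_k}_k$) and each $S_v$ is fixed once Maker first moves on from $v$, for all but finitely many such $j$ this $F$ is one and the same $(k(k+1)+1)$-element set. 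For each of those $j$, Maker connects $w^{(j)}$ to a vertex of $F\subseteq P_\NameToWhereWePlay$, hence $S_{w^{(j)}}\vert_{k+1}=(u_1,\dots,u_k,v)$ for some $v\in P_\NameToWhereWePlay$, so $w^{(j)}\in W'$. As infinitely many such $j$ exist, $\verts{W'\cap P}=\aleph_0$.

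The hard part will be the reservoir survival fact, where one must extract from the informal ``balanced fashion'' description of the strategy and from the universality of $(n_i)_{i\in\mbbN}$ the precise statements that $\verts{L}=j$ at the moment Maker considers her $(k+1)$\textsuperscript{st} move out of $w^{(j)}$ (so that each earlier $w^{(j')}$ has already received its first $k$ Maker-edges and no later vertex of $P$ is yet present), that the set $F$ genuinely stabilises to a single $(k(k+1)+1)$-element set, and that ``wanting to play into $P_\NameToWhereWePlay$'' really does produce an edge into $P_\NameToWhereWePlay$ for all but finitely many of the relevant $j$. The order-theoretic bookkeeping of the first step is routine by comparison, once one knows that $R_1,\dots,R_{k+1}$ are distinct.
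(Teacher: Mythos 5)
Your proposal is correct and follows essentially the same route as the paper's proof: you compare $\hat{\mcalQ}^i$ with $\mcalQ^i_k$ for $i\in[k]\setminus\{a_k\}$ and with $\mcalQ^{-}$, $\mcalQ^{+}$ for $i\in\{a_k,k+1\}$, and reduce everything to the fact that $\verts{W'\cap P}=\aleph_0$ whenever $\verts{W_k\cap P}=\aleph_0$, which the paper also derives from the universality of $(n_i)_{i\in\mbbN}$. Your ``reservoir survival'' argument is in fact spelled out in more detail than the paper's one-sentence version of the same step, and the order-theoretic bookkeeping you describe is exactly what the paper leaves implicit.
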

		\begin{claimproof}
			\begin{enumerate}[label=\textbf{Case \arabic*:},ref=Case \arabic*,align = left,noitemsep,leftmargin=6pt]
				\item\label{item:Q_everywhere_claim_regular} \textbf{$i \in [k+1] \setminus \braces{a_k,k+1}$.}
				We prove this case by showing that any element of $\mcalQ^{i}_k$ is also an element of $\hat{\mcalQ}^i$.
				The \namecref{cla:Q_i_contains_a_copy_of_Q} then follows from \labelcref{item:Q_everywhere}.
				
				Consider any $P \in \mcalQ^{i}_k $.
				By definition we have $\verts{P \cap W_k}=\aleph_0$ and by \labelcref{item:reservoir} every $w \in P\cap W_k$ was connected precisely to $V(K^k)$ in its first $k$ moves of being connected to $\GM$.
				According to the $\mbbQ$-game strategy, for infinitely many of them Maker played to a vertex of $P_\NameToWhereWePlay$ next, as $x$ is chosen according to the appearances of the finite sequence $(1,\dots,k,x)$ in the sequence of $(n_i)_{i \in \mbbN}$, which contains every finite sequence infinitely often.
				Thus $\verts{P \cap W'}=\aleph_0$.
				But this is true for any $P \in \mcalP$ with $\verts{P \cap W_k}=\aleph_0$.
				Thus, the \namecref{cla:Q_i_contains_a_copy_of_Q} follows for $i \in [k+1]\setminus \braces{a_k,k+1}$.
				
				\item \textbf{$i \in \braces{a_k,k+1}$.}
				With the same reasoning as in \labelcref{item:Q_everywhere_claim_regular} any $P \in \mcalQ^{-}$ is also in $\hat{\mcalQ}^{a_k}$ and any $P \in \mcalQ^{+}$ is also in $\hat{\mcalQ}^{k+1}$.
				Since both, $\mcalQ^{-}$ and $\mcalQ^{+}$, are isomorphic to $\mbbQ$, this implies the \namecref{cla:Q_i_contains_a_copy_of_Q} for $i \in \braces{a_k,k+1}$. \qedhere 
			\end{enumerate}
		\end{claimproof}

		To continue, consider the set $F$ of the first $k(k+1)+1$ many vertices $u \in R_{k+1}$ with $S_u \vert_k = (u_1 , \dots, u_k)$.
		There is such a set as any vertex $u \in R_{k+1} \cap W_k$ fulfils this property and there are infinitely many such vertices by the choice of $R_{k+1}$.
		Further, fix $i \in [k+1]$ and consider $Q \in \hat{\mcalQ}^i$.
		For any vertex $q \in Q \cap W'$, Maker considered the set $F$ in the $(k+1)$\textsuperscript{st} move of connecting $q$ to $\GM$.
		As Maker played from $Q \cap W'$ to $F$ in a balanced fashion, that is she played to the smallest vertex of $F$ with respect to the order defined in \labelcref{equ:order_to_play_balanced}, there are at most $k$ vertices in $F$ that have only finitely many neighbours in $Q \cap W'$.
		By choosing some superset of $k$ vertices if necessary, we obtain a colouring of $\hat{\mcalQ}^i$ indicating for every $Q\in \hat{\mcalQ}^i$ a subset of $F$ of size $k$ such that all other vertices of $F$ have infinitely many neighbours in $Q\cap W'$.
		By taking the complements of the sets of size $k$, the same colouring indicates for each $Q$ for which subset $F^\prime \subseteq F$ every vertex in $F^\prime$ was picked by Maker for infinitely many vertices $v$ of $Q \cap W'$ to be played to in the $(k+1)$\textsuperscript{st} move of connecting $v$ to $\GM$.
		There are $\binom{k(k+1)+1}{k}$ colours in this colouring of $\hat{\mcalQ}^i$, since every $k$ element subset of $F$ gets assigned a colour and $F$ has $k(k+1)+1$ elements.
		In particular, this is a finite number.
		Thus by \cref{prop:Ramsey_true_on_rationals} and \cref{cla:Q_i_contains_a_copy_of_Q}, there is a colour class that again contains a subset that is order isomorphic to $\mbbQ$.
		We fix a suitable colour class $C^i \subseteq \hat{\mcalQ}^i$ for every $i \in [k+1]$.
		As seen, any of the $k+1$ fixed colour classes excludes $k$ vertices of $F$ as the next candidate and since ${F} $ has size $k(k+1)+1$, there is at least one vertex in $F$ that is met by the fixed colour class $C^i$ for every $i \in \mbbN$.
		We choose the smallest such vertex as $u_{k+1}$.
		Then \labelcref{item:reservoir,item:Q_everywhere} are fulfilled by construction and \labelcref{item:choose_where_to_play} is fulfilled by the choice of $\NameToWhereWePlay$ as mentioned above.
		Lastly, we set $K^{k+1} := G[V(K^k) \cup \braces{u_{k+1}}]$, thus also \labelcref{item:complete_graph_chain} is ensured.
	\end{proof}

	\section{The dense rational number game}\label{sec:dense_rational_number_game}
	
	Since Maker can always win in the rational number game, let us consider a variant of the game in which it is harder for Maker to achieve her goal.
	Thus, consider the \emph{dense rational number game} in which the vertices of the board are again represented by the rational numbers and it is Maker's aim to have at the end of the game a complete graph $\Kalephzero$ contained in her graph $\GM$ such that the set of vertices $V(\Kalephzero)$ is a dense subset of $\mbbQ$.
	
	In this game Maker is doomed to fail.
	We prove this by providing a winning strategy for Breaker in this variant of the game.
	We call it the \emph{dense $\mbbQ$-game strategy}:
	at the beginning of the game Breaker picks
	\begin{itemize}
		\item an infinite sequence $\mcalI := \left( I_j\right)_{j \in \mbbN}$ of pairwise disjoint intervals of $\mbbQ$,
		\item an enumeration $\mcalQ$ of $\mbbQ$, and
		\item an enumeration $(\braces{p_j,q_j})_{j \in \mbbN}$ of the $2$-element subsets of $\mbbQ$.
	\end{itemize}
	For any $s \in I_j \setminus \braces{p_j,q_j}$ where $s$ appears later in the enumeration $\mcalQ$ than both $p_j$ and $q_j$ he pairs the edges $\braces{p_js,q_js}$.
	Then, whenever Maker claims either $p_js$ or $q_js$ in one of her turns, Breaker claims the other in his following turn.
	
	\begin{lemma}
		The dense $\mbbQ$-game strategy is a pairing strategy that is a winning strategy for Breaker in the dense rational number game.
	\end{lemma}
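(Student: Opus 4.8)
The plan is to verify the two things the definition of a pairing strategy requires: first, that the pairs Breaker has chosen are genuinely pairwise disjoint (so that he can always honour the pairing rule and never gets stuck needing to claim an already-claimed edge), and second, that every complete graph $\Kalephzero$ with dense vertex set contains both edges of one of the pairs. The first point I would dispatch quickly. Each pair $\braces{p_js,q_js}$ consists of two edges sharing the endpoint $s$, and the two other endpoints are $p_j$ and $q_j$; since the $\braces{p_j,q_j}$ enumerate the $2$-element subsets of $\mbbQ$, the pair is determined by the unordered pair $\braces{p_j,q_j}$ together with the ``apex'' vertex $s \in I_j$. An edge $\braces{a,b}$ of the board lies in at most one such pair: if $\braces{a,b}=\braces{p_js,q_js}$ then, say, $a=s$ and $\braces{b\}=\braces{p_j}$ or $\braces{q_j}$ — but knowing which vertex is the apex is not yet forced, so I need to be a little careful. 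The clean way is: the apex of a pair is the unique endpoint lying in $I_j$; and because the $I_j$ are pairwise disjoint, a vertex $s$ lies in at most one $I_j$, which then pins down $j$ and hence $\braces{p_j,q_j}$, and hence the whole pair. So an edge $\braces{a,b}$ can belong to a pair only if exactly one of $a,b$ lies in some $I_j$; say $a\in I_j$; then $b$ must be $p_j$ or $q_j$, and the partner edge is determined. Thus distinct pairs are disjoint, and Breaker's strategy is well defined: whenever Maker takes one edge of a pair, its partner is still free (Maker has not taken it — it is a different edge — and Breaker has not taken it, since Breaker only ever claims partners of Maker's previous moves and never initiates).

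For the second and substantive point, suppose for contradiction that Maker wins, i.e.\ at the end of the game $\GM$ contains a complete graph on a vertex set $D\subseteq\mbbQ$ that is dense in $\mbbQ$. I claim $D$ contains one of our pairs entirely in Maker's colour, contradicting the pairing discipline. Pick any $j$. Since $D$ is dense and $I_j$ is a nondegenerate interval, $D\cap I_j$ is infinite; in particular it is not contained in the finite set $\braces{p_j,q_j}$, so choose $s\in (D\cap I_j)\setminus\braces{p_j,q_j}$. This is almost what I want, except I also needed $s$ to come after both $p_j$ and $q_j$ in the enumeration $\mcalQ$ for the pair $\braces{p_js,q_js}$ to have actually been declared. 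So instead pick $s\in D\cap I_j$ with $s\notin\braces{p_j,q_j}$ and with $s$ appearing in $\mcalQ$ later than both $p_j$ and $q_j$ — possible because only finitely many elements of $\mbbQ$ precede $\max(p_j,q_j)$ in $\mcalQ$, while $D\cap I_j$ is infinite. Then the pair $\braces{p_js,q_js}$ was among those Breaker set up.

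Now here is the catch and the place I expect the argument to need its one real idea: this only produces a contradiction if $p_j,q_j\in D$, for only then are both edges $p_js$ and $q_js$ edges of Maker's complete graph on $D$. A single $j$ does not guarantee that. But I get to choose $j$, and $\mcalI$ is an infinite sequence of intervals; the fix is to choose the interval so that its ``base points'' already lie in $D$. Concretely: since $D$ is dense (hence infinite), pick any two distinct $p,q\in D$, and let $j$ be an index with $\braces{p_j,q_j}=\braces{p,q}$ — such $j$ exists since the $\braces{p_j,q_j}$ enumerate \emph{all} $2$-element subsets of $\mbbQ$, in particular $\braces{p,q}$. With this $j$ we have $p_j,q_j\in D$, and by the previous paragraph we can find $s\in D\cap I_j$, distinct from $p_j,q_j$, appearing after both in $\mcalQ$. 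Then $p_js$ and $q_js$ are both edges of the complete graph on $D$, hence both claimed by Maker; but they form one of Breaker's pairs, so by the pairing strategy Breaker claimed one of them — contradiction. (The fact that $I_j$ might be any interval, possibly not even meeting $D$ a priori, is irrelevant: density of $D$ forces $D\cap I_j$ infinite regardless.) Hence no such $D$ exists, and the dense $\mbbQ$-game strategy is a winning strategy for Breaker.

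I expect the only genuinely delicate point to be bookkeeping around the enumeration condition ``$s$ appears later than $p_j$ and $q_j$'': one must confirm that Breaker really has declared the relevant pair, and one must confirm disjointness of pairs despite the apparent asymmetry between the apex $s$ and the base $\braces{p_j,q_j}$ — the pairwise-disjointness of the intervals $I_j$ is exactly what makes the apex, and therefore the whole pair, recoverable from any single one of its two edges. Everything else is a short density argument.
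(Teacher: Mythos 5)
Your reduction of the winning property to the existence of a single pair $\braces{p_js,q_js}$ with $p_j$, $q_j$ and $s$ all lying in Maker's vertex set is correct and is essentially the paper's argument (the paper phrases it slightly differently: once $p_n$ and $q_n$ have appeared in $\mcalQ$, all but finitely many vertices of $I_n$ are covered by the pairing, so Maker's vertex set cannot be dense in $I_n$). The genuine gap is in your verification that the pairs are pairwise disjoint. You claim that an edge can belong to a pair only if exactly one of its endpoints lies in some $I_j$, so that the apex --- and hence the whole pair --- is recoverable from the edge using only the disjointness of the intervals. This is false: the sets $\braces{p_j,q_j}$ run over \emph{all} two-element subsets of $\mbbQ$, so $p_j$ and $q_j$ may themselves lie in intervals of $\mcalI$ (this cannot be legislated away, since the winning argument needs an index $j$ with $\braces{p_j,q_j}=\braces{p,q}$ for arbitrary $p,q$ in Maker's vertex set). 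Concretely, take $s\in I_j\setminus\braces{p_j,q_j}$ with $p_j\in I_i$ for some $i\neq j$ and with $s\in\braces{p_i,q_i}$: then the edge $p_js$ would lie both in the pair $\braces{p_js,q_js}$ (apex $s$, index $j$) and in the pair $\braces{p_ip_j,q_ip_j}$ (apex $p_j$, index $i$), and disjointness of the intervals does nothing to exclude this.

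What actually rules this out is the clause you invoked only in the second half of your argument: a pair is declared only when its apex appears later in the enumeration $\mcalQ$ than both of its base points. If the edge above belonged to both declared pairs, then $s$ would appear in $\mcalQ$ after $p_j$ (being an apex for index $j$), while $p_j$ would appear after $s\in\braces{p_i,q_i}$ (being an apex for index $i$), a contradiction. This is exactly the argument the paper gives, and it is the one non-trivial point of the disjointness check; without it the strategy need not be a pairing strategy at all, since Breaker could be called upon to answer with an edge he has already spent answering a different pair. The remainder of your proof is sound.
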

	\begin{proof}
		To verify that the dense $\mbbQ$-game strategy is a pairing strategy we need to verify that the pairs of edges $\braces{p_js,q_js}$ are pairwise disjoint.
		For fixed $j$ and $s \neq t \in I_j \setminus \braces{p_j,q_j}$ it is true that $\braces{p_js,q_js} \cap \braces{p_jt,q_jt} = \emptyset$.
		For $i\neq j$ with $s \in I_i$ and $t \in I_j$, the intersection of $\braces{p_is,q_is}$ and $ \braces{p_jt,q_jt}$ can only be nonempty if both $s \in \braces{p_j,q_j}$ and $t \in \braces{p_i,q_i}$.
		But this cannot happen:
		In the enumeration $\mcalQ$ the vertices $p_i$, $q_i$, $p_j$ and $q_j$ appear in a unique order. 
		In case either $p_i$ or $q_i$ appear the latest, Breaker considers only the pairs $\braces{p_is,q_is}$ and in case either $p_j$ or $q_j$ appear the latest, Breaker only considers $\braces{p_jt,q_jt}$.
		
		To see that the dense $\mbbQ$-game strategy is a winning strategy for Breaker, suppose for a contradiction that Maker finishes a $\KQ$ whose vertices are dense in $\mbbQ$.
		Then there is $n \in \mbbN$ such that $\braces{p_n,q_n} \subseteq V(\KQ)$.
		But according to the dense $\mbbQ$-game strategy only finitely many vertices of $I_n$ can be in $\KQ$:
		since at some point both $p_n$ and $q_n$ have appeared in $\mcalQ$, only finitely many other elements have appeared before and for every vertex $v \in I_n$ that appears later in $\mcalQ$ the pair $\braces{p_n v, q_n v}$ is considered in the dense $\mbbQ$-game strategy.
		Thus, $V(\KQ)$ cannot be dense in $I_n$, which implies that the set of vertices is not dense in $\mbbQ$.
	\end{proof}

	\medskip
	\begin{bibdiv}
		\begin{biblist}
			
			\bib{B08}{book}{
				author={Beck, J{\'o}zsef},
				title={Combinatorial games: tic-tac-toe theory},
				publisher={Cambridge University Press Cambridge},
				date={2008},
				volume={114},
			}
			
			\bib{BEG23}{article}{
				author={Bowler, Nathan},
				author={Emde, Marit},
				author={Gut, Florian},
				title={The {$K^{\aleph_0}$} game: Vertex colouring},
				date={2023},
				journal={Mathematika},
				volume={69},
				number={3},
				pages={584\ndash 599},
				url={https://londmathsoc.onlinelibrary.wiley.com/doi/abs/10.1112/mtk.12196},
			}
			
			\bib{BG23K4}{article}{
				author={Bowler, Nathan},
				author={Gut, Florian},
				title={{The $K^4$-Game}},
				date={2023},
				journal={{arXiv: 2309.02132}},
			}
			
			\bib{BMMN06}{book}{
				author={Bhattacharjee, Meenaxi},
				author={Macpherson, Dugald},
				author={M{\"o}ller, R{\"o}gnvaldur~G.},
				author={Neumann, Peter~M.},
				title={Notes on infinite permutation groups},
				publisher={Springer},
				date={2006},
				ISBN={978-3-540-49813-1},
			}
			
			\bib{D17}{book}{
				author={Diestel, Reinhard},
				title={Graph {T}heory},
				edition={5},
				publisher={Springer},
				date={2017},
				ISBN={978-3-662-53621-6},
			}
			
			\bib{DM41}{article}{
				author={Dushnik, Ben},
				author={Miller, E.~W.},
				title={Partially ordered sets},
				date={1941},
				journal={American Journal of Mathematics},
				volume={63},
				number={3},
				pages={600\ndash 610},
				url={http://www.jstor.org/stable/2371374},
			}
			
			\bib{GG55}{article}{
				author={Greenwood, R.~E.},
				author={Gleason, A.~M.},
				title={Combinatorial relations and chromatic graphs},
				date={1955},
				journal={Canadian Journal of Mathematics},
				volume={7},
				pages={1–7},
			}
			
			\bib{GRS90}{book}{
				author={Graham, Ronald~L.},
				author={Rothschild, Bruce~L.},
				author={Spencer, Joel~H.},
				title={Ramsey theory},
				edition={2. ed.},
				publisher={Wiley},
				date={1990},
				ISBN={978-1-118-79966-6},
			}
			
			\bib{R30}{article}{
				author={Ramsey, Frank~P.},
				title={On a problem of formal logic},
				date={1930},
				journal={Proceedings of the London Mathematical Society},
				volume={2},
				number={1},
				pages={264\ndash 286},
			}
			
		\end{biblist}
	\end{bibdiv}
\end{document}